\def\ra{\rightarrow}
\def\qed{{\hfill $\Box$}}
\newtheorem{theorem}{THEOREM}[section]
\newtheorem{proposition}[theorem]{Proposition}
\newtheorem{lemma}[theorem]{Lemma}
\newtheorem{example}[theorem]{Example}
\theoremstyle{definition}
\theoremstyle{remark}
\newtheorem{remark}[theorem]{Remark}
\newcommand\CC{{\mathbb C}}
\newcommand\RR{{\mathbb R}}
\newcommand\NN{{\mathbb N}}
\def\SO{\mathop{\rm SO}\nolimits}
\def\Re{\mathop{\rm Re}\nolimits}
\def\Im{\mathop{\rm Im}\nolimits}
\def\blfootnote{\xdef\@thefnmark{}\@footnotetext}
\begin{document}

\title[On the classification by Morimoto and Nagano]{On the classification by Morimoto and Nagano}\blfootnote{{\bf Mathematics Subject Classification:} 32C09, 32V30.} \blfootnote{{\bf Keywords:} immersions and embeddings of CR-manifolds in complex space.}
\author[Isaev]{Alexander Isaev}

\address{Mathematical Sciences Institute\\
Australian National University\\
Acton, Canberra, ACT 2601, Australia}
\email{alexander.isaev@anu.edu.au}

\maketitle

\thispagestyle{empty}

\pagestyle{myheadings}

\begin{abstract} 
We consider a family $M_t^3$, with $t>1$, of real hypersurfaces in a complex affine $3$-dimensional quadric arising in connection with the classification of homogeneous compact simply-connected real-analytic hypersurfaces in\, $\CC^n$ due to Morimoto and Nagano. To finalize their classification, one needs to resolve the problem of the CR-embeddability of $M_t^3$ in\, $\CC^3$. In our earlier article we showed that $M_t^3$ is CR-embeddable in\, $\CC^3$ for all $1<t<\sqrt{(2+\sqrt{2})/3}$. In the present paper we prove that $M_t^3$ can be immersed in $\CC^3$ for every $t>1$ by means of a polynomial map. In addition, one of the immersions that we construct helps simplify the proof of the above CR-embeddability theorem and extend it to the larger parameter range $1<t<\sqrt{5}/2$.
\end{abstract}

\section{Introduction}\label{intro}
\setcounter{equation}{0}

This paper concerns the following classical problem investigated by Morimoto and Nagano in \cite{MN}: determine all compact simply-connected real-analytic hypersurfaces in $\CC^n$ homogeneous under an action of a Lie group by CR-transformations. It was shown in \cite{MN} that every such hypersurface is CR-equivalent to either the sphere $S^{2n-1}$ or, for $n=3,7$, to a manifold from the 1-parameter family $M_t^n$ as defined below.

To introduce $M_t^n$ for any $n\ge 2$, consider the $n$-dimensional affine quadric in $\CC^{n+1}$:
\begin{equation}
Q^n:=\{(z_1,\dots,z_{n+1})\in\CC^{n+1}:z_1^2+\dots+z_{n+1}^2=1\}.\label{quadric}
\end{equation}
The group $\SO(n+1,\RR)$ acts on $Q^n$, with the orbits of the action being the sphere $S^n=Q^n\cap\RR^{n+1}$ as well as the compact strongly pseudoconvex hypersurfaces
\begin{equation}
M_t^n:=\{(z_1,\dots,z_{n+1})\in\CC^{n+1}: |z_1|^2+\dots+|z_{n+1}|^2=t\}\cap Q^n,\,\, t>1,\label{mtn}
\end{equation}
which are simply-connected for $n\ge 3$. These hypersurfaces are all nonspherical (see \cite[Remark 2.2]{I1}) and pairwise CR-nonequivalent (see \cite[Example 13.9]{KZ}, \cite[Theorem 2]{BH}). They are the boundaries of Grauert tubes around $S^n$ (note that $Q^n$ can be naturally identified with the tangent bundle $T(S^n)$). In \cite{MN}, Morimoto and Nagano did not investigate the question of whether or not $M_t^n$ admits a real-analytic CR-embedding in $\CC^n$ for $n=3,7$, thus their classification in these two dimensions was not finalized.

The family $M_t^n$ was studied in our papers \cite{I1}, \cite{I2}. In particular, in \cite[Corollary 2.1]{I1} we observed that a necessary condition for the existence of a real-analytic CR-embedding of $M_t^n$ in $\CC^n$ is the embeddability of the sphere $S^n$ in $\CC^n$ as a totally real submanifold. The problem of the existence of a totally real embedding of $S^n$ in $\CC^n$ was considered by Gromov (see \cite{G1} and \cite[p.~193]{G2}), Stout-Zame (see \cite{SZ}), Ahern-Rudin (see \cite{AR}), Forstneri\v c (see \cite{F1}, \cite{F2}, \cite{F3}). In particular, $S^n$ was shown to admit a smooth totally real embedding in $\CC^n$ only for $n=3$, hence $M_t^7$ cannot be real-analytically CR-embedded in $\CC^7$. On the other hand, since $S^3$ is a totally real submanifold of $Q^3$, any real-analytic totally real embedding of $S^3$ in $\CC^3$ (which is known to exist, for instance, by \cite{AR}) extends to a biholomorphic map defined in a neighborhood of $S^3$ in $Q^3$. Owing to the fact that $M_t^3$ accumulate to $S^3$ as $t\ra 1$, this observation implies that $M_t^3$ admits a real-analytic CR-embedding in $\CC^3$ for all $t$ sufficiently close to 1. Thus, the classification of homogeneous compact simply-connected real-analytic hypersurfaces in complex dimension 3 is special as it contains manifolds other than the sphere $S^5$.

More precisely, in \cite[Theorem 1.1]{I2} we showed that $M_t^3$ embeds in $\CC^3$ for all $1<t<\sqrt{(2+\sqrt{2})/3}$ by means of a real-analytic CR-map. This was proved by analyzing the holomorphic continuation of the explicit polynomial totally real embedding of $S^3$ in $\CC^3$ constructed in \cite{AR} (see Remark \ref{othermaps} for details), and the argument was quite involved computationally. Since the hypersurfaces in the family $M_t^3$ are all pairwise CR-nonequivalent, \cite[Theorem 1.1]{I2} leaves the problem of the CR-embeddability of $M_t^3$ in $\CC^3$ for $t\ge \sqrt{(2+\sqrt{2})/3}$ completely open. In this paper, we make steps towards resolving this problem and also look at related matters. 

Even the question of whether every $M_t^3$ can be \emph{immersed} in $\CC^3$ is nontrivial. The answer to this question is positive as the quadric $Q^n$ is known to admit a holomorphic immersion in $\CC^n$ for every $n\ge 2$  (see \cite[p.~19]{BN}). However, such an immersion cannot be algebraic. In our first main result we show that, nevertheless, an algebraic immersion exists for each $M_t^3$:

\begin{theorem}\label{main1}
Every hypersurface $M_t^3$ with $t>1$ can be immersed in $\CC^3$ by means of a polynomial map $\CC^4\to\CC^3$.
\end{theorem}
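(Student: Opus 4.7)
To prove Theorem~\ref{main1}, I first translate the immersion requirement into a polynomial-algebraic criterion. A holomorphic polynomial $F\co\CC^4\to\CC^3$ restricts to an immersion of the real $5$-manifold $M_t^3$ if and only if its $\CC$-linear differential $dF\co T_pQ^3\to\CC^3$ is injective at every $p\in M_t^3$: since $T_pM_t^3$ sits in the complex $3$-space $T_pQ^3$ as a real hyperplane, a nontrivial real kernel of $dF|_{T_pM_t^3}$ would, by dimension count, force a nontrivial complex kernel of $dF|_{T_pQ^3}$. Writing $g(z):=z_1^2+\dots+z_4^2-1$, so that $T_pQ^3=(\nabla g)^\perp$, the criterion becomes nonvanishing on $M_t^3$ of the polynomial
\[
\Delta_F(z)\;:=\;\det\!\begin{pmatrix}\partial F_j/\partial z_k\\ z_k\end{pmatrix}.
\]
The task is therefore to produce, for each $t>1$, a polynomial $F\co\CC^4\to\CC^3$ whose critical set $\Sigma_F:=\{\Delta_F=0\}\cap Q^3$ is disjoint from the compact Grauert-tube boundary $M_t^3=\{|z|^2=t\}\cap Q^3$.

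My preferred approach is to obtain $F$ by polynomial approximation of the non-algebraic holomorphic immersion $\Phi\co Q^3\to\CC^3$ furnished by \cite{BN}. Since $Q^3$ is a closed algebraic subvariety of $\CC^4$, the polynomial hull in $\CC^4$ of any compact $K\subset Q^3$ stays inside $Q^3$---points outside $Q^3$ are separated from $K$ by the defining polynomial $g$---so Oka--Weil, applied to the holomorphic extension of $\Phi$ to a $\CC^4$-neighborhood of $Q^3$ (which exists by Cartan's theorem), yields polynomial maps $F_k\co\CC^4\to\CC^3$ with $F_k|_K\to\Phi|_K$ uniformly. Taking $K$ to be a compact neighborhood of $M_t^3$ in $Q^3$ and applying Cauchy estimates in local holomorphic coordinates on $Q^3$, one upgrades this to uniform convergence $(dF_k)|_{T_pQ^3}\to(d\Phi)|_{T_pQ^3}$ on $M_t^3$. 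Since $\Phi$ is an immersion on the compact set $M_t^3$ and the injectivity of a $\CC$-linear map between two $3$-dimensional complex spaces is an open condition, for sufficiently large $k$ the polynomial $F_k$ itself immerses $M_t^3$, proving the theorem.

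I expect the paper's actual proof to be more explicit---the abstract promises that one of the constructed immersions feeds back into a sharpening of \cite[Theorem 1.1]{I2}---probably via a $t$-dependent polynomial perturbation of a polynomial totally real embedding of $S^3$ into $\CC^3$ such as the one from \cite{AR}. In that strategy one starts with the holomorphic extension $\Phi_0\co\CC^4\to\CC^3$ of such an embedding, which by total realness automatically immerses $Q^3$ on a neighborhood of $S^3$, and modifies it by a polynomial $G_t$ engineered so that $\Sigma_{\Phi_0+G_t}$ is pushed outside the tube $\{|z|^2\le t\}$. The main obstacle, in either approach, is the real-codimension mismatch: $\Sigma_F$ is a complex hypersurface of $Q^3$, of real codimension $2$, while $M_t^3$ has real codimension $1$, so generically the two meet in a real $3$-dimensional set. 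Overcoming this requires either delicate tracking of how $\{\Delta_F=0\}$ deforms under the chosen perturbation, or, as in the approximation plan, the soft combination of compactness of $M_t^3$, openness of the immersion condition, and density of polynomials in $\mathcal{O}(Q^3)$.
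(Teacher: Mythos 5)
Your Oka--Weil approach is genuinely different from the paper's and, with one adjustment, it does prove the theorem; but as written the choice of compact set $K$ is not quite right. You correctly observe that the polynomial hull in $\CC^4$ of any compact $K\subset Q^3$ stays inside $Q^3$ (the defining polynomial $g$ separates points off the variety), but that alone does not make $K$ polynomially convex, which is what Oka--Weil needs. For ``a compact neighborhood of $M_t^3$ in $Q^3$'' such as the annulus $\{t-\e\le|z|^2\le t+\e\}\cap Q^3$, the polynomial hull strictly contains $K$ --- it fills in the solid Grauert tube. The fix is simply to take $K$ to be the \emph{closed Grauert tube} $\overline{T}_{t+\e}:=\{|z|^2\le t+\e\}\cap Q^3$: since $|z|^2$ restricted to the Stein manifold $Q^3$ is a strictly plurisubharmonic exhaustion, $\overline{T}_{t+\e}$ is $\mathcal{O}(Q^3)$-convex, and combining this with your separation observation and Cartan's extension theorem shows it is polynomially convex in $\CC^4$. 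Then Oka--Weil applies, Cauchy estimates give $C^1$-convergence on the slightly smaller compact $M_t^3$, and openness of the immersion condition on compacts finishes the argument. So the plan works, but the appeal to Stein-theoretic convexity of the tube (not merely of $Q^3$ itself) needs to be made.

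Where you and the paper really diverge is in what the proof delivers. The paper changes to coordinates $w_1=z_1+iz_2$, $w_2=z_1-iz_2$, $w_3=z_3+iz_4$, $w_4=z_3-iz_4$, so $Q^3=\{w_1w_2+w_3w_4=1\}$, and restricts attention to maps $F_n=(w_1,w_3,P_n)$; a short lemma shows that $F_n$ immerses $M_t^3$ exactly where $w_3\,\partial P_n/\partial w_2 - w_1\,\partial P_n/\partial w_4$ is nonzero. It then \emph{prescribes} this expression to equal $R_n=\bigl((\tfrac12+ia_n)w_1w_2-(\tfrac12-ia_n)w_3w_4\bigr)^{2n}$, solves the resulting first-order linear PDE for $P_n$ by a monomial ansatz, and finds that solvability forces $\Re\bigl((\tfrac12+ia_n)^{2n+1}\bigr)=0$; choosing $a_n\to\infty$ makes the nonvanishing threshold $t_n=2\sqrt{\tfrac14+a_n^2}$ tend to infinity. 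This is heavier computationally than your soft argument, but the explicitness is the whole point: the concrete map $F_1$ is then analyzed for injectivity to prove Theorem \ref{main2} (CR-embeddability for $1<t<\sqrt{5}/2$). Your Oka--Weil argument proves Theorem \ref{main1} more cheaply but produces an immersion with no usable formula, and hence gives no purchase on the embeddability question. Your closing paragraph about the ``real-codimension mismatch'' is a red herring for your own approach (once the approximand is an immersion everywhere on $Q^3$, compactness plus openness suffices); it is, however, exactly the difficulty the paper's explicit construction must confront.
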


\noindent In the proof of Theorem \ref{main1} in the next section, for every $n\in\NN$ we explicitly construct a polynomial map $F_n:\CC^4\ra\CC^3$ that yields an immersion of $M_t^3$ for all $1<t<t_n$, where $t_n\ra\infty$ as $n\ra\infty$. None of the maps $F_n$ is injective on $M_t^3$ if $t\ge\sqrt{2}$, thus these maps certainly cannot be used to establish the CR-embeddability of $M_t^3$ in $\CC^3$ for $t\ge \sqrt{2}$ (see Remark \ref{remarkinj}). We did not investigate $F_n$ for injectivity on $M_t^3$ if $\sqrt{(2+\sqrt{2})/3}\le t<\sqrt{2}$ and $n>1$. It is possible that for some $t$ in this range and sufficiently large $n$ the immersions of $M_t^3$ given by the maps $F_n$ are in fact embeddings, but the calculations required to verify the injectivity of $F_n$ on $M_t^3$ for $n>1$ appear to be rather prohibiting. In fact, even studying the fibers of a much simpler polynomial map considered in \cite{I2} was computationally quite hard. 

At the same time, the first map $F_1$ in the sequence $\{F_n\}$ turns out to be easier to handle than its counterpart studied in \cite{I2}, which allows us to improve \cite[Theorem 1.1]{I2} and obtain our second main result: 

\begin{theorem}\label{main2} 
The hypersurface $M_t^3$ admits a real-analytic CR-embeddings in $\CC^3$ if $1<t<\sqrt{5}/2$.
\end{theorem}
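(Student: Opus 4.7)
The plan is to promote the immersion $F_1$ constructed in the proof of Theorem \ref{main1} to an embedding of $M_t^3$ in the extended range $1<t<\sqrt{5}/2$. Since Theorem \ref{main1} already provides $F_1$ as an immersion of $M_t^3$ throughout its interval of definition, and the accompanying remarks place the failure of injectivity no lower than $t=\sqrt{2}$, the content of Theorem \ref{main2} reduces to verifying that $F_1$ separates points of $M_t^3$ whenever $1<t<\sqrt{5}/2$. Combined with the compactness of $M_t^3$ and the real-analyticity of $F_1$, such injectivity automatically yields a real-analytic CR-embedding.

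To establish injectivity I would assume $F_1(z)=F_1(w)$ for two points $z,w\in M_t^3\subset\CC^4$ and aim to conclude $z=w$. The available constraints are the three scalar equations arising from $F_1(z)-F_1(w)=0$ together with the four real/holomorphic defining relations
\begin{equation*}
\sum_{j=1}^{4}z_j^2=\sum_{j=1}^{4}w_j^2=1,\qquad \sum_{j=1}^{4}|z_j|^2=\sum_{j=1}^{4}|w_j|^2=t.
\end{equation*}
A natural first simplification exploits the transitive action of $\SO(4,\RR)$ on $M_t^3$: by applying a suitable orthogonal transformation one may place $z$ in a standard position, reducing the number of free parameters substantially. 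After this normalization, I would rewrite the component equations of $F_1(z)-F_1(w)=0$ in terms of the coordinates of $w$ and of symmetric expressions in $z$ and $w$, so as to expose a common factor measuring the deviation of $w$ from $z$.

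The decisive step is to show that, after this factorization, the residual polynomial system cannot be satisfied with $z\ne w$ as long as $t<\sqrt{5}/2$. I would perform an elimination to produce a single scalar polynomial equation relating $t$ to a real quantity encoding the discrepancy $z-w$, and then check that the smallest value of $t$ admitting a nontrivial solution equals $\sqrt{5}/2$ exactly. The main obstacle is carrying out this elimination while preserving sharpness: the improvement over \cite[Theorem 1.1]{I2} hinges on $F_1$ being structurally simpler than the polynomial map used there, so that the resulting system is tractable and the critical value of $t$ can be identified explicitly rather than bounded by an intermediate estimate. Once the governing polynomial is isolated, the threshold $\sqrt{5}/2$ should drop out of a direct computation.
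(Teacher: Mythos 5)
Your high-level plan — reduce to injectivity of $F_1$ on $M_t^3$ and show it holds for $1<t<\sqrt{5}/2$ — matches the paper, but the concrete steps you propose have a genuine gap and miss the structural features that make the paper's argument work.

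The most serious problem is the normalization step. You propose to use the transitive $\SO(4,\RR)$-action on $M_t^3$ to put $z$ in a standard position, but $F_1$ is \emph{not} $\SO(4,\RR)$-equivariant: it is a fixed polynomial map $(w_1,w_2,w_3,w_4)\mapsto(w_1,w_3,P_1)$, and if $F_1(z)=F_1(w)$ there is no reason whatsoever to expect $F_1(gz)=F_1(gw)$ for $g\in\SO(4,\RR)$. So the "reduction of free parameters" you envision simply does not go through. What actually substitutes for it is the very rigid form of $F_1$: because the first two components are $w_1$ and $w_3$, the equation $F_1(z)=F_1(w)$ immediately forces $z_1=w_1$, $z_3=w_3$, and then the quadric relation eliminates $w_2$. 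The remaining equality, from the third component $P_1$, is a cubic in the single unknown $\hat w_4$ that factors off $(\hat w_4 - w_4)$, leaving a quadratic (Proposition \ref{propfibers}). None of this is accessible from an $\SO(4,\RR)$-normalization.

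Your second step, "eliminate to a single scalar polynomial in $t$ and read off the threshold," also does not reflect the actual difficulty. In the paper, the two extraneous roots of the quadratic, written in terms of $b:=w_3w_4$, are forced by $M_t^3$-membership to lie in the intersection $\mathcal D$ of the interiors of two ellipses with foci among $0,1,(1+i\sqrt{3})/2$ and focal-sum $\sqrt{5}/2$, and injectivity follows from showing that a certain explicit rational function $\phi(b)$ is $\ge 5/4$ on all of $\mathcal D$ (Lemma \ref{philarge}), contradicting the necessary inequality $\phi(b)<t^2<5/4$. This is a two-variable optimization/inequality argument carried out along a distinguished line and its orthogonal complements, not a resultant-type elimination to a univariate polynomial in $t$. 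Finally, you assert the critical value "equals $\sqrt{5}/2$ exactly"; the paper proves only that $\sqrt{5}/2$ is a \emph{lower} bound for the injectivity threshold $\tau_1$ of $F_1$ (with $\tau_1\le\sqrt{2}$ by Remark \ref{remarkinj}) and does not claim sharpness, so that confidence is unwarranted.
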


\noindent Note that the bound $\sqrt{5}/2$ that appears in Theorem \ref{main2} is only slightly greater than the bound $\sqrt{(2+\sqrt{2})/3}$ of \cite[Theorem 1.1]{I2}. Thus, our purpose here was not so much to enlarge the range of $t$ as to produce a more transparent argument for CR-embeddability, which we were able to achieve by invoking the map $F_1$ instead of the map arising from \cite{AR}. Theorem \ref{main2} is established in Section \ref{sect3}. 

Our construction of the sequence $\{F_n\}$ is to some extent inspired by article \cite{AR}, in which harmonic polynomials were used to produce the map presented in the main theorem therein (see Remark \ref{othermaps}). However, we do not explicitly utilize harmonicity; rather, we directly come up with suitable polynomial immersions. The maps $F_n$ are also of independent interest as each of them yields an explicit totally real embedding of $S^3$ in $\CC^3$. It then follows that each $F_n$ defines a CR-embedding of $M_t^3$ to $\CC^3$ for $1<t<\tau_n$, where $\tau_n\le\sqrt{2}$ (see Remark \ref{embofsn}).   

{\bf Acknowledgment.} We thank L\'aszl\'o Lempert for drawing our attention to reference \cite{BH} and Stefan Nemirovski for many useful discussions. This work was done while the author was visiting the Steklov Mathematical Institute in Moscow. The research is supported by the ARC Discovery Grant DP190100354.

\section{Proof of Theorem \ref{main1}}\label{sect2}
\setcounter{equation}{0}

For convenience, we will argue in the coordinates
$$
w_1:=z_1+iz_2,\,\,w_2:=z_1-iz_2,\,\,w_3:=z_3+iz_4,\,\,w_4:=z_3-iz_4.\label{coordw}
$$
In these coordinates the quadric $Q^3$ becomes
\begin{equation}
\left\{(w_1,w_2,w_3,w_4)\in\CC^4: w_1w_2+w_3w_4=1\right\}\label{qnew}
\end{equation}
(see (\ref{quadric})), the sphere $S^3\subset Q^3$ becomes
\begin{equation}
\left\{(w_1,w_2,w_3,w_4)\in\CC^4: w_2=\bar w_1,\,w_4=\bar w_3\right\}\cap Q^3,\label{sphereq}
\end{equation}
and the hypersurface $M_t^3\subset Q^3$ becomes
\begin{equation}
\left\{(w_1,w_2,w_3,w_4)\in\CC^4: |w_1|^2+|w_2|^2+|w_3|^2+|w_4|^2=2t\right\}\cap Q^3\label{formm7}
\end{equation}
(see (\ref{mtn})).

Let $F:\CC^4\to\CC^3$ be a map of the form
\begin{equation}
(w_1,w_2,w_3,w_4)\mapsto (w_1,w_3,f(w_1,w_2,w_3,w_4)),\label{formF}
\end{equation}
where $f$ is an entire function. Clearly, $F$ yields an immersion of $M_t^3$ in $\CC^3$ if and only if its restriction $\tilde F:=F|_{Q^3}$ is nondegenerate at every point of $M_t^3$. We need the following fact.

\begin{lemma}\label{nondegenerate}
The map $\tilde F$ is nondegenerate at a point $w^0=(w_1^0,w_2^0,w_3^0,w_4^0)\in Q^3$ if and only if one has
\begin{equation}
w_3^0\frac{\partial f}{\partial w_2}(w^0)-w_1^0\frac{\partial f}{\partial w_4}(w^0)\ne 0.\label{nondegencondi}
\end{equation}
\end{lemma}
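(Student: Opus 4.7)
The plan is to reduce the statement to a $2 \times 2$ linear-algebra computation on the tangent space $T_{w^0} Q^3$. Since $Q^3$ is a complex $3$-fold and $\tilde F$ maps to $\CC^3$, nondegeneracy of $\tilde F$ at $w^0$ is equivalent to the differential $d\tilde F_{w^0} \colon T_{w^0} Q^3 \to \CC^3$ being injective, which by equality of dimensions is the same as being an isomorphism.

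First, from the defining equation (\ref{qnew}) of $Q^3$, the tangent space is cut out by a single linear equation:
$$
T_{w^0} Q^3 = \{(v_1,v_2,v_3,v_4)\in\CC^4 : w_2^0 v_1 + w_1^0 v_2 + w_4^0 v_3 + w_3^0 v_4 = 0\}.
$$
Owing to the special form (\ref{formF}) of $F$, a vector $v\in T_{w^0} Q^3$ lies in $\ker d\tilde F_{w^0}$ exactly when $v_1 = v_3 = 0$ together with
$$
\frac{\partial f}{\partial w_2}(w^0)\, v_2 + \frac{\partial f}{\partial w_4}(w^0)\, v_4 = 0.
$$
Substituting $v_1 = v_3 = 0$ into the tangent-space condition produces the additional relation $w_1^0 v_2 + w_3^0 v_4 = 0$, so the kernel condition becomes exactly the homogeneous $2\times 2$ system
$$
\begin{pmatrix} w_1^0 & w_3^0 \\ \frac{\partial f}{\partial w_2}(w^0) & \frac{\partial f}{\partial w_4}(w^0) \end{pmatrix}\begin{pmatrix} v_2 \\ v_4 \end{pmatrix}=\begin{pmatrix} 0 \\ 0 \end{pmatrix}
$$
in the free parameters $(v_2, v_4)$. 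This system has only the trivial solution if and only if its determinant is nonzero, and this determinant is the negative of the left-hand side of (\ref{nondegencondi}); hence nondegeneracy of $\tilde F$ at $w^0$ is equivalent to (\ref{nondegencondi}).

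The argument is a direct linear-algebra check, so I do not anticipate any real obstacle. The one point to keep in mind is that restricting to $T_{w^0} Q^3$ is essential: on all of $\CC^4$ the conditions $v_1 = v_3 = 0$ would leave $v_2$ and $v_4$ independent, and it is only the quadric equation that couples them and gives rise to the invariant determinant appearing in (\ref{nondegencondi}).
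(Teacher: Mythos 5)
Your proof is correct, but it takes a genuinely different route from the paper's.

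The paper argues entirely in local coordinates: it notes $|w_1|+|w_3|>0$ on $Q^3$, picks $(w_1,w_3,w_4)$ as a chart where $w_1\ne 0$ (and $(w_1,w_2,w_3)$ where $w_3\ne 0$), substitutes the appropriate expression for the eliminated variable into $f$, and computes the Jacobian determinant of $\tilde F$ explicitly in each chart, obtaining (up to a nonzero factor) the left-hand side of (\ref{nondegencondi}). You instead work invariantly: you describe $T_{w^0}Q^3$ as the kernel of the differential of the defining equation $w_1w_2+w_3w_4-1$, observe that the special form (\ref{formF}) forces any kernel vector of $d\tilde F_{w^0}$ to have $v_1=v_3=0$, and reduce nondegeneracy to the nonvanishing of a $2\times 2$ determinant in the remaining free components $(v_2,v_4)$. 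This avoids the case split on $w_1\ne 0$ versus $w_3\ne 0$ and makes it transparent that the obstruction is precisely the coupling of $v_2$ and $v_4$ via the quadric relation, as you point out in your closing paragraph. The paper's chart-based computation is slightly more elementary and reuses the same local parametrizations later, but your linear-algebraic argument is cleaner as a stand-alone proof. One small point worth making explicit: the fact that the row $(w_1^0, w_3^0)$ is nonzero (which holds on $Q^3$ since $w_1w_2+w_3w_4=1$) guarantees that the tangent relation is a genuine codimension-one condition on $(v_2,v_4)$, but the $2\times 2$ determinant criterion is valid regardless, so the proof is complete as written.
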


\begin{proof}
Observe that $|w_1|+|w_3|>0$ on $Q^3$ (see (\ref{qnew})). For $w_1\ne 0$ we choose $w_1,w_3,w_4$ as local coordinates on $Q^3$ and write the third component of $\tilde F$ as
$$
\varphi:=f\left(w_1,\frac{1-w_3w_4}{w_1},w_3,w_4\right).
$$
In this coordinate chart the Jacobian $J_{\tilde F}$ of $\tilde F$ is calculated as
$$
J_{\tilde F}=\frac{\partial\varphi}{\partial w_4}=-\frac{w_3}{w_1}\frac{\partial f}{\partial w_2}+\frac{\partial f}{\partial w_4},\label{phieq}
$$
hence it is nonvanishing at $w^0\in Q^3$ with $w_1^0\ne 0$ if and only if (\ref{nondegencondi}) holds.

Analogously, if $w_3\ne 0$, we choose $w_1,w_2,w_3$ as local coordinates on $Q^3$ and write the third component of $\tilde F$ as
$$
\psi:=f\left(w_1,w_2,w_3,\frac{1-w_1w_2}{w_3}\right).
$$
In this coordinate chart we have
$$
J_{\tilde F}=-\frac{\partial\psi}{\partial w_2}=-\frac{\partial f}{\partial w_2}+\frac{w_1}{w_3}\frac{\partial f}{\partial w_4},
$$
hence $J_{\tilde F}$ does not vanish at $w^0\in Q^3$ with $w_3^0\ne 0$ if and only if condition (\ref{nondegencondi}) is satisfied.\end{proof}

We will now construct a sequence $\{F_n\}_{n\ge 1}$ of maps of the form (\ref{formF}):
\begin{equation}
F_n(w_1,w_2,w_3,w_4)=(w_1,w_3,P_n(w_1,w_2,w_3,w_4)),\label{formoffn}
\end{equation}
where $P_n$ is a polynomial, having the property: there exists $t_n>1$ such that
\begin{equation}
w_3\frac{\partial P_n}{\partial w_2}-w_1\frac{\partial P_n}{\partial w_4}\ne 0\,\,\hbox{on $M_t^3$ for $1<t<t_n$,}\label{condpolynom}
\end{equation}
and
\begin{equation}
t_n\ra\infty\,\,\hbox{as $n\ra\infty$}.\label{tendstoi}
\end{equation}
Lemma \ref{nondegenerate} will then imply the theorem.

Our strategy for coming up with polynomials $P_n$ as above is as follows:
\vspace{0.1cm}

\begin{itemize}

\item[{\bf (A)}] Choose some polynomials $R_n$ having the property: there exists $t_n>1$ such that
\begin{equation}
R_n\ne 0\,\,\hbox{on $M_t^3$ for $1<t<t_n$,}\label{rndonotvanis}
\end{equation}
and (\ref{tendstoi}) holds.
\vspace{0.1cm}

\item[{\bf (B)}] For each $n$ find a polynomial $P_n$ that solves the equation
\begin{equation}
w_3\frac{\partial P_n}{\partial w_2}-w_1\frac{\partial P_n}{\partial w_4}=R_n\,\,\hbox{everywhere on $\CC^4$}.\label{equationforp}
\end{equation}
By (\ref{rndonotvanis}), the polynomials $P_n$ will satisfy (\ref{condpolynom}) as required.

\end{itemize}
\vspace{0.1cm}

Coming up with suitable polynomials $R_n$ in Part (A) such that there are solutions to (\ref{equationforp}) in Part (B) is not easy. After much computational experimentation, we discovered that polynomials of the following form work well:
\begin{equation}
R_n:=\left(\left(\frac{1}{2}+ia_n\right)w_1w_2-\left(\frac{1}{2}-ia_n\right)w_3w_4\right)^{2n},\label{defrn}
\end{equation}
where $a_n$ are positive numbers to be chosen later. 

Let us study the zeroes of $R_n$ on $M_t^3$. First of all, we restrict $R_n$ to $Q^3$ by replacing $w_1w_2$ with $1-w_3w_4$:
\begin{equation}
R_n\big|_{Q^3}=\left(w_3w_4-\left(\frac{1}{2}+ia_n\right)\right)^{2n}\label{restrrn}
\end{equation}
(see (\ref{qnew})). Thus, $R_n$ vanishes at a point $w=(w_1,w_2,w_3,w_4)\in Q^3$ if and only if
$$
w_1w_2=\frac{1}{2}-ia_n,\quad w_3w_4=\frac{1}{2}+ia_n.
$$
Such a point lies in $M_t^3$ for some $t>1$ if and only if
$$
|w_1|^2+\left|\frac{1}{2}-ia_n\right|^2\frac{1}{|w_1|^2}+|w_3|^2+\left|\frac{1}{2}+ia_n\right|^2\frac{1}{|w_3|^2}=2t
$$
(see (\ref{formm7})), or, equivalently, if and only if
\begin{equation}
|w_1|^2+\left(\frac{1}{4}+a_n^2\right)\frac{1}{|w_1|^2}+|w_3|^2+\left(\frac{1}{4}+a_n^2\right)\frac{1}{|w_3|^2}=2t.\label{idd8}
\end{equation}

The following elementary lemma, which we state without proof, will often be helpful:
 
\begin{lemma}\label{min}
For fixed $p>0$, let
$$
g(x):=x+\frac{p}{x},\quad x>0.
$$
Then $\min_{x>0} g(x)=2\sqrt{p}$.
\end{lemma}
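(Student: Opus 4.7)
The plan is to invoke the AM--GM inequality, which is tailor-made for this situation since $g(x)$ is the sum of two positive quantities whose product is the constant $p$. Explicitly, applying $a+b\ge 2\sqrt{ab}$ with $a=x$ and $b=p/x$ yields
$$
g(x)=x+\frac{p}{x}\ge 2\sqrt{x\cdot\frac{p}{x}}=2\sqrt{p}
$$
for every $x>0$, with equality precisely when $x=p/x$, i.e.\ at $x=\sqrt{p}$. Evaluating gives $g(\sqrt{p})=\sqrt{p}+p/\sqrt{p}=2\sqrt{p}$, which is therefore attained and is the minimum.

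If one prefers a calculus argument, it is equally immediate: $g$ is smooth on $(0,\infty)$ with $g'(x)=1-p/x^2$, whose unique zero on $(0,\infty)$ is $x=\sqrt{p}$; since $g''(x)=2p/x^3>0$, this critical point is a strict global minimum, and its value is again $2\sqrt{p}$. Because the statement is declared elementary and used only as a convenience tool in the subsequent analysis of equation (\ref{idd8}), there is no real obstacle here; the only question is presentation, and the one-line AM--GM proof seems preferable.
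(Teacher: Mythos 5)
The paper states this lemma without proof, describing it as elementary, so there is no argument in the source to compare against. Your AM--GM proof (and the alternative calculus verification via $g'(x)=1-p/x^2$) is correct, complete, and exactly the standard argument one would supply here; nothing further is needed.
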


Letting in Lemma \ref{min}
$$
p=\frac{1}{4}+a_n^2,\label{choicep}
$$
from (\ref{idd8}) we see
$$
t\ge2\sqrt{\frac{1}{4}+a_n^2}.
$$
We then set
\begin{equation}
t_n:=2\sqrt{\frac{1}{4}+a_n^2}.\label{deftn}
\end{equation}
Clearly, with this choice of $t_n$ condition (\ref{rndonotvanis}) holds. To satisfy (\ref{tendstoi}), the positive numbers $a_n$ will be chosen to increase to $\infty$.

\begin{remark}\label{degenerationbeyond}
Notice that the map $\tilde F_n:=F_n\big|_{Q^3}$ degenerates at some point of $M_t^3$ whenever $t\ge t_n$. Indeed, for any such $t$ find $x_0>0$ satisfying $g(x_0)=t$. Then the point
$$
\left(\sqrt{x_0},\frac{\frac{1}{2}-ia_n}{\sqrt{x_0}},\sqrt{x_0},\frac{\frac{1}{2}+ia_n}{\sqrt{x_0}}\right)
$$
lies in $M_t^3$, and $\tilde F_n$ is degenerate at it.
\end{remark}    

Let us now turn to Part (B). We fix $n\ge 1$ and look for a solution to equation (\ref{equationforp}) in the form
\begin{equation}
P_n=\sum_{k=1}^{2n}\alpha_kw_1^{2n-k}w_2^{2n-k+1}w_3^{k-1}w_4^k,\label{formofpn}
\end{equation}
where $\alpha_j$ are complex numbers, which will be computed in terms of $a_n$ shortly. With $P_n$ given by (\ref{formofpn}), the left-hand side of (\ref{equationforp}) is
\begin{equation}
\begin{array}{l}
\displaystyle -\alpha_1(w_1w_2)^{2n}+\alpha_{2n}(w_3w_4)^{2n}+\\
\vspace{-0.3cm}\\
\displaystyle\hspace{1cm}\sum_{k=1}^{2n-1}\Bigl((k+1)\alpha_{2n-k}-(2n-k+1)\alpha_{2n-k+1}\Bigr)(w_1w_2)^k(w_3w_4)^{2n-k}.\label{left}
\end{array}
\end{equation}

Write the right-hand side of (\ref{equationforp}) as
\begin{equation}
\begin{array}{l}
\displaystyle\left(\frac{1}{2}+ia_n\right)^{2n}(w_1w_2)^{2n}+\left(\frac{1}{2}-ia_n\right)^{2n}(w_3w_4)^{2n}+\\
\vspace{-0.1cm}\\
\hspace{0.8cm}\displaystyle\sum_{k=1}^{2n-1}(-1)^k{{2n}\choose{k}}  \left(\frac{1}{2}+ia_n\right)^k\left(\frac{1}{2}-ia_n\right)^{2n-k}(w_1w_2)^k(w_3w_4)^{2n-k}.\label{right}
\end{array}
\end{equation}
Comparing (\ref{left}) and (\ref{right}), we, first of all, see
\begin{equation}
\alpha_1=-\left(\frac{1}{2}+ia_n\right)^{2n},\quad \alpha_{2n}=\left(\frac{1}{2}-ia_n\right)^{2n}.\label{indbasis}
\end{equation}
Next, using the expression for $\alpha_{2n}$ from (\ref{indbasis}), comparison of (\ref{left}) and (\ref{right}) for $k$ increasing from 1 to $n-1$ yields by induction
$$
\alpha_{2n-k}=\frac{1}{k+1}{{2n}\choose{k}}\sum_{\ell=0}^k(-1)^{\ell}\left(\frac{1}{2}+ia_n\right)^{\ell}\left(\frac{1}{2}-ia_n\right)^{2n-\ell},\,\,k=1,\dots,n-1,
$$
in particular
\begin{equation}
\alpha_{n+1}=\frac{1}{n}{{2n}\choose{n-1}}\sum_{\ell=0}^{n-1}(-1)^{\ell}\left(\frac{1}{2}+ia_n\right)^{\ell}\left(\frac{1}{2}-ia_n\right)^{2n-\ell}.\label{alphanplus1}
\end{equation}

Similarly, using the expression for $\alpha_1$ from (\ref{indbasis}), comparison of (\ref{left}) and (\ref{right}) for $k$ decreasing from $2n-1$ to $n+1$ yields by induction
$$
\begin{array}{l}
\displaystyle\alpha_{2n-k+1}=-\frac{1}{2n-k+1}{{2n}\choose{k}}\sum_{\ell=0}^{2n-k}(-1)^{\ell}\left(\frac{1}{2}+ia_n\right)^{2n-\ell}\left(\frac{1}{2}-ia_n\right)^{\ell},\\
\vspace{-0.3cm}\\
\displaystyle \hspace{9cm}k=n+1,\dots,2n-1,
\end{array}
$$
in particular
\begin{equation}
\alpha_n=-\frac{1}{n}{{2n}\choose{n+1}}\sum_{\ell=0}^{n-1}(-1)^{\ell}\left(\frac{1}{2}+ia_n\right)^{2n-\ell}\left(\frac{1}{2}-ia_n\right)^{\ell}.\label{alphan}
\end{equation}

We will now compare the remaining terms of (\ref{left}) and (\ref{right}), namely, the ones corresponding to $k=n$:
$$
(n+1)(\alpha_n-\alpha_{n+1})=(-1)^n{{2n}\choose{n}}  \left(\frac{1}{2}+ia_n\right)^n\left(\frac{1}{2}-ia_n\right)^{n}.
$$
Invoking formulas (\ref{alphanplus1}), (\ref{alphan}) then leads to the following condition on $a_n$:
\begin{equation}
\sum_{\ell=0}^{2n}(-1)^{\ell}\left(\frac{1}{2}+ia_n\right)^{\ell}\left(\frac{1}{2}-ia_n\right)^{2n-\ell}=0.\label{condan}
\end{equation}
Dividing by $\left(\frac{1}{2}-ia_n\right)^{2n}$ and setting
$$
A_n:=-\frac{\frac{1}{2}+ia_n}{\frac{1}{2}-ia_n},
$$
we see that (\ref{condan}) is equivalent to 
$$
\sum_{\ell=0}^{2n}A_n^{\ell}=0.
$$
Summing up the first $2n+1$ terms of the geometric series with common ratio $A$, we then obtain
$$
1-A^{2n+1}=0,
$$
or, equivalently, 
\begin{equation}
\Re\left(\frac{1}{2}+ia_n\right)^{2n+1}=0.\label{conddeterminean}
\end{equation}

Thus, $a_n$ must have the property that the value of the function $z^{2n+1}$ at\linebreak $\frac{1}{2}+ia_n$ is an imaginary number. Certainly, $z^{2n+1}$ takes imaginary values on the line $\Re z=1/2$, and we let $\frac{1}{2}+ia_n$ be the point with the largest possible argument at which $\arg z^{2n+1}=\pi/2$. Namely, we choose $a_n>0$ so that
\begin{equation}
\arg\left(\frac{1}{2}+ia_n\right)=\frac{\frac{\pi}{2}+2\pi K}{2n+1},\label{choicean}
\end{equation}       
where $K$ is the largest integer less than $n/2$. It is then clear that (\ref{conddeterminean}) holds and 
$$
\arg\left(\frac{1}{2}+ia_n\right)\ra\frac{\pi}{2}\,\,\hbox{as $n\ra\infty$},
$$
hence the sequence $\{a_n\}$ converges to $\infty$. Therefore, by (\ref{deftn}), condition (\ref{tendstoi}) is satisfied.

We have thus found two sequences of polynomials $\{R_n\}$ and $\{P_n\}$ as required in Parts (A) and (B). The proof of the theorem is complete.\qed
\vspace{0.1cm}

To give a better idea of the above argument, we will now write out details for $n=1$. This special case will be required for our proof of Theorem \ref{main2} in the next section.

\begin{example}\label{exn=1} \rm Let $n=1$. Condition (\ref{choicean}) yields
$$
a_1=\frac{1}{2\sqrt{3}}.
$$ 
Hence, by (\ref{defrn}) we have
$$
R_1=\left(\frac{1}{6}+\frac{i}{2\sqrt{3}}\right)(w_1w_2)^2-\frac{2}{3}w_1w_2w_3w_4+\left(\frac{1}{6}-\frac{i}{2\sqrt{3}}\right)(w_3w_4)^2,
$$
and (\ref{formofpn}), (\ref{indbasis}) lead to a formula for $P_1$:
\begin{equation}
P_1=-\left(\frac{1}{6}+\frac{i}{2\sqrt{3}}\right)w_1w_2^2w_4+\left(\frac{1}{6}-\frac{i}{2\sqrt{3}}\right)w_2w_3w_4^2.\label{formp1}
\end{equation}

Further, by (\ref{deftn}), we have
\begin{equation}
t_1=\frac{2}{\sqrt{3}}.\label{t1}
\end{equation}
Then, by Remark \ref{degenerationbeyond}, the map $\tilde F_1$ is nondegenerate at every point of $M_t^3$ if and only if $1<t<2/\sqrt{3}$. We will investigate $\tilde F_1$ for injectivity in the next section.  
\end{example}

\section{Proof of Theorem \ref{main2}}\label{sect3}
\setcounter{equation}{0}

In order to produce a CR-embedding of $M_t^3$ in $\CC^3$ for $1<t<\sqrt{5}/2$ we will utilize the map $\tilde F_1$. It is clear from  (\ref{t1}) that $\sqrt{5}/2<t_1$, thus we only need to show that $\tilde F_1$ is injective on each $M_t^3$ for $t$ in this range. 

We start by studying the fibers of $\tilde F_1$. 

\begin{proposition}\label{propfibers}
Let two points $w=(w_1,w_2,w_3,w_4)$ and $\hat w=(\hat w_1,\hat w_2,\hat w_3,\hat w_4)$ lie in $Q^3$ and assume that $\tilde F_1(w)=\tilde F_1(\hat w)$. Then we have:
\begin{itemize}
\item[{\rm (a)}] $\hat w_1=w_1$, $\hat w_3=w_3$;

\item[{\rm (b)}] if $w_1=0$ or $w_3=0$, then $\hat w=w$;

\item[{\rm (c)}] if $w_1\ne 0$ and $w_3\ne 0$, then either $\hat w=w$ or one of the following holds:
\begin{equation}
\begin{array}{l}
\displaystyle\hat w_4=-\frac{1+i\sqrt{3}}{2w_3}(w_3w_4-1),\\
\vspace{-0.1cm}\\
\displaystyle\hat w_4=-\frac{1-i\sqrt{3}}{2w_3}\left(w_3w_4-\frac{1+i\sqrt{3}}{2}\right);
\end{array}\label{solquadr}
\end{equation}

\item[{\rm (d)}] neither of the two values in the right-hand side of {\rm (\ref{solquadr})} is equal to $w_4$ if $w\in M_t^3$ for $t<t_1$;

\item[{\rm (e)}] the two values in the right-hand side of {\rm (\ref{solquadr})} are distinct if $w\in M_t^3$ for $t<t_1$.

\end{itemize}
Hence, the fiber $\tilde F_1^{-1}(\tilde F_1(w))$ consists of at most three points, and, if $w\in M_t^3$ with $w_1\ne 0$, $w_3\ne 0$ for $t<t_1$, it consists of exactly three points.

\end{proposition}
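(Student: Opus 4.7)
The plan is to reduce the fiber equation to a single cubic identity in one auxiliary variable and then apply Lemma \ref{min}. Part (a) is immediate from the form of $\tilde F_1$. For (b), if $w_1=0$ then (\ref{qnew}) forces $w_3\ne 0$ and $w_4=1/w_3$, and the same holds for $\hat w$; substituting into (\ref{formp1}) and using $w_3w_4=1$ leaves $P_1(0,w_2,w_3,w_4)=\left(\tfrac{1}{6}-\tfrac{i}{2\sqrt{3}}\right)w_2w_4$, which is linear in $w_2$ with nonzero coefficient (since $w_4=1/w_3\ne 0$), giving $\hat w_2=w_2$. The case $w_3=0$ is analogous with the roles of the two monomials of $P_1$ interchanged.

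For part (c), assume $w_1,w_3\ne 0$ and eliminate $w_2,\hat w_2$ via (\ref{qnew}). Set $u:=w_3w_4$, $\hat u:=w_3\hat w_4$, and $\omega:=e^{i\pi/3}=\tfrac{1}{2}+i\tfrac{\sqrt{3}}{2}$, so that the two coefficients of (\ref{formp1}) equal $\omega/3$ and $\bar\omega/3$ and one has $\omega+\bar\omega=1$, $\omega\bar\omega=1$. A direct computation then collapses $P_1(w)=P_1(\hat w)$ to
$$
Q(u)=Q(\hat u),\qquad Q(z):=z(z-1)(z-\omega).
$$
Since $\hat u=u$ is one root, the other two satisfy a quadratic whose sum and product are, by Vieta, $(1+\omega)-u$ and $u^2-(1+\omega)u+\omega$; a substitution check (using $\omega+\bar\omega=1$ and $\omega\bar\omega=1$) confirms that these roots are $\hat u_1=\omega(1-u)$ and $\hat u_2=1-\bar\omega u$. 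Dividing by $w_3$ and rewriting $-\omega=-(1+i\sqrt{3})/2$ and $\bar\omega=(1-i\sqrt{3})/2$ produces the two formulas of (\ref{solquadr}).

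For parts (d) and (e), a short algebraic check reveals that each of the three potential coincidences $\hat w_4^{(1)}=w_4$, $\hat w_4^{(2)}=w_4$ and $\hat w_4^{(1)}=\hat w_4^{(2)}$ reduces to the single equation
$$
u=\frac{\omega}{1+\omega}=\frac{1+\omega}{3}=\frac{e^{i\pi/6}}{\sqrt{3}},
$$
at which $|u|=|1-u|=1/\sqrt{3}$. For $w\in M_t^3$ with $w_1,w_3\ne 0$, writing $x:=|w_1|^2$ and $y:=|w_3|^2$, relations (\ref{qnew}) and (\ref{formm7}) yield
$$
x+\frac{|1-u|^2}{x}+y+\frac{|u|^2}{y}=2t,
$$
so Lemma \ref{min} applied to both pairs would force $2t\ge 2(|u|+|1-u|)=4/\sqrt{3}=2t_1$, contradicting $t<t_1$. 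This establishes (d) and (e); the final fiber count then follows by combining (a)--(e), noting that in the range $w\in M_t^3$, $w_1,w_3\ne 0$, $t<t_1$ the three values $w_4,\hat w_4^{(1)},\hat w_4^{(2)}$ are pairwise distinct.

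The main obstacle is the identification of the roots in part (c): after dividing out $(\hat u-u)$ from $Q(\hat u)-Q(u)$, the remaining quadratic has coefficients polynomial in $u$, and its roots admit the clean closed forms $\omega(1-u)$ and $1-\bar\omega u$ only thanks to the identities $\omega+\bar\omega=1$ and $\omega\bar\omega=1$ that are specific to the value $a_1=1/(2\sqrt{3})$ chosen in (\ref{choicean}). Once these roots are in hand, parts (d) and (e) reduce to a single elementary application of Lemma \ref{min}.
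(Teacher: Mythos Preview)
Your proof is correct and follows essentially the same route as the paper's: both arguments eliminate $w_2,\hat w_2$ via the quadric equation, reduce the fiber condition to a cubic in the single variable $w_3\hat w_4$ with $(\hat w_4-w_4)$ as an evident factor, and then solve the residual quadratic. Your presentation is somewhat tidier: recognizing that $P_1|_{Q^3}=-Q(u)/(3w_1w_3)$ with $Q(z)=z(z-1)(z-\omega)$ and $\omega=e^{i\pi/3}$ makes the factorization and the Vieta check for the roots $\omega(1-u)$ and $1-\bar\omega u$ transparent, and it lets you handle (d) and (e) uniformly by observing that all three coincidence conditions collapse to the single equation $u=\tfrac{1}{2}+\tfrac{i}{2\sqrt{3}}$, after which Lemma~\ref{min} applies directly. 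The paper instead reaches (d) by noting that the bracket in its cubic identity specializes to $-R_1|_{Q^3}$ and then invokes the nondegeneracy of $\tilde F_1$ on $M_t^3$ for $t<t_1$ established earlier; this is the same conclusion reached by a slightly more indirect reference. Substantively the two proofs coincide.
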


\begin{proof} Part (a) is immediate from (\ref{formoffn}). Furthermore, (\ref{formp1}) yields
\begin{equation}
\begin{array}{l}
\displaystyle-\left(\frac{1}{6}+\frac{i}{2\sqrt{3}}\right)w_1\hat w_2^2\hat w_4+\left(\frac{1}{6}-\frac{i}{2\sqrt{3}}\right)\hat w_2w_3\hat w_4^2=\\
\vspace{-0.1cm}\\
\hspace{3cm}\displaystyle-\left(\frac{1}{6}+\frac{i}{2\sqrt{3}}\right)w_1w_2^2w_4+\left(\frac{1}{6}-\frac{i}{2\sqrt{3}}\right)w_2w_3w_4^2,\label{mainids}
\end{array}
\end{equation}
which together with (\ref{qnew}) implies part (b).    

From now on, we assume that $w_1\ne 0$ and $w_3\ne 0$. Then using (\ref{qnew}) we substitute
\begin{equation}
w_2=\frac{1-w_3w_4}{w_1},\quad \hat w_2=\frac{1-w_3\hat w_4}{w_1}\label{expr234}
\end{equation}
into (\ref{mainids}) and simplifying the resulting expression obtain
\begin{equation}
\begin{array}{l}
\displaystyle(\hat w_4-w_4)\left[-\frac{w_3^2}{3}\hat w_4^2+\left(\left(\frac{1}{2}+\frac{i}{2\sqrt{3}}\right)w_3-\frac{w_3^2w_4}{3}\right)\hat w_4+\right.\\
\vspace{-0.3cm}\\
\hspace{3cm}\displaystyle\left.\left(\frac{1}{2}+\frac{i}{2\sqrt{3}}\right)w_3w_4-\frac{w_3^2w_4^2}{3}-\left(\frac{1}{6}+\frac{i}{2\sqrt{3}}\right)\right]=0.
\end{array}\label{mainids4}
\end{equation}
We treat identity (\ref{mainids4}) as an equation with respect to $\hat w_4$. By part (a) and formula (\ref{expr234}), the solution $\hat w_4=w_4$ leads to the point $w$. Further, the solutions of the quadratic equation given by setting the expression in the square brackets in (\ref{mainids4}) to zero, are shown in formula (\ref{solquadr}). This establishes part (c). 

Next, for $\hat w_4=w_4$ the expression in the square brackets in (\ref{mainids4}) becomes
$$
-\left(w_3w_4-\left(\frac{1}{2}+\frac{i}{2\sqrt{3}}\right)\right)^2=-R_1\big|_{Q^3}
$$
(cf.~(\ref{restrrn})), which implies that $\tilde F_1$ is degenerate at $w$. This contradicts our choice of $w$ and thus establishes part (d).

Finally, the two values in (\ref{solquadr}) are easily seen to be equal if and only if
$$
w_3w_4=\frac{1}{2}+\frac{i}{2\sqrt{3}},
$$
which leads to the same contradiction as in part (d), so part (e) follows. The proof is complete.

\end{proof} 

Proposition \ref{propfibers} implies that in order to establish Theorem \ref{main2} it suffices to show that for every value $1<t<\sqrt{5}/2$ and every point
$$
w=\left(w_1,\frac{1-w_3 w_4}{w_1},w_3,w_4\right)\in M_t^3\,\,\hbox{with $w_1\ne 0$, $w_3\ne 0$}, 
$$
the point $\hat w:=\left(w_1,(1-w_3\hat w_4)/w_1,w_3,\hat w_4\right)$ does not lie in $M_t^3$ for any of the two choices of $\hat w_4$ in (\ref{solquadr}). In fact, we only need to consider the first solution in (\ref{solquadr}) as the second solution turns into the first one upon interchanging $w_4$ and $\hat w_4$.

Let now $\hat w$ correspond to the first choice of $\hat w_4$ in (\ref{solquadr}) and assume that $\hat w\in M_t^3$. Set $b:=w_3w_4$, $\hat b:=w_3\hat w_4$. Then by (\ref{solquadr}) we see
\begin{equation}
\hat b=-\frac{1+i\sqrt{3}}{2}(b-1).\label{solquadr1}
\end{equation}

We have
\begin{equation}
\begin{array}{l}
\displaystyle |w_1|^2+\frac{|b-1|^2}{|w_1|^2}+|w_3|^2+\frac{|b|^2}{|w_3|^2}=2t,\\
\vspace{-0.1cm}\\
\displaystyle |w_1|^2+\frac{|\hat b-1|^2}{|w_1|^2}+|w_3|^2+\frac{|\hat b|^2}{|w_3|^2}=2t,
\end{array}\label{condonab}
\end{equation}
where by (\ref{solquadr1}) the second equation can be rewritten as
\begin{equation}
|w_1|^2+\left|b-\displaystyle\frac{1+i\sqrt{3}}{2}\right|^2\frac{1}{|w_1|^2}+|w_3|^2+\frac{|b-1|^2}{|w_3|^2}=2t.\label{condonab1}
\end{equation}
By Lemma \ref{min} it then follows that the point $b$ lies in the intersection of the interiors of two ellipses:
$$
{\mathcal D}:={\mathcal E}_1\cap{\mathcal E}_2,
$$
where
$$
{\mathcal E}_1:=\left\{z\in\CC: |z-1|+|z|<\frac{\sqrt{5}}{2}\right\},
$$
with $\partial {\mathcal E}_1$ having foci at 1 and 0, and
$$
{\mathcal E}_2:=\left\{z\in\CC: |z-1|+\left|z-\displaystyle\frac{1+i\sqrt{3}}{2}\right|<\frac{\sqrt{5}}{2}\right\},
$$ 
with $\partial {\mathcal E}_2$ having foci at 1 and $(1+i\sqrt{3})/2$.

Subtracting (\ref{condonab1}) from the first equation in (\ref{condonab}) yields
\begin{equation}
\frac{b_1-\sqrt{3}b_2}{|w_1|^2}+\frac{1-2b_1}{|w_3|^2}=0,\label{subtract}
\end{equation}
where $b_1:=\Re b$ and $b_2:=\Im b$. Observe that neither of the numerators in (\ref{subtract}) is zero since neither of the lines $2b_1-1=0$, $b_1-\sqrt{3}b_2=0$ intersects ${\mathcal D}$. 

By (\ref{subtract}) we have
$$
|w_1|^2=\frac{b_1-\sqrt{3}b_2}{2b_1-1}|w_3|^2.
$$
Plugging this expression into the first identity in (\ref{condonab}) and simplifying the resulting formulas, we obtain
$$
\left(\frac{b_1-\sqrt{3}b_2}{2b_1-1}+1\right)|w_3|^2+\left(\frac{|b-1|^2(2b_1-1)}{(b_1-\sqrt{3}b_2)}+|b|^2\right)\frac{1}{|w_3|^2}=2t.
$$
Lemma \ref{min} then yields
\begin{equation}
\left(\frac{b_1-\sqrt{3}b_2}{2b_1-1}+1\right)\left(\frac{|b-1|^2(2b_1-1)}{(b_1-\sqrt{3}b_2)}+|b|^2\right)<\frac{5}{4}.\label{theinequality}
\end{equation}

Let us denote the left-hand side of (\ref{theinequality}) by $\phi(b)$. We will now study the behavior of the function $\phi$ in the domain ${\mathcal D}$ and prove:

\begin{lemma}\label{philarge}
One has
$$
\phi(b)\ge \frac{5}{4}
$$
for all $b\in{\mathcal D}$.
\end{lemma}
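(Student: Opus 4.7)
My plan is to introduce $c := b - 1$ (so $c_1 = b_1 - 1$ and $c_2 = b_2$) and rewrite $\phi$ in a much more tractable form. The identities $v = 1 + 2c_1$, $u = 1 + c_1 - \sqrt{3}c_2$, $|b-1|^2 = |c|^2$, and $|b|^2 = v + |c|^2$ are immediate. Substituting these into the definition of $\phi$ and setting $s := u + v$, direct computation gives
$$
\phi = \frac{(u+v)\bigl(|c|^2 v + (v+|c|^2)u\bigr)}{uv} = \frac{s\bigl(s|c|^2 + uv\bigr)}{uv} = s + \frac{s^2|c|^2}{uv}.
$$
This compact expression will be the backbone of the whole argument.

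I then split into two cases according to the size of $s$. If $s \geq 5/4$, then because $u, v > 0$ on $\mathcal{D}$ (as already noted in the paragraph preceding (\ref{subtract})) the second summand is nonnegative, hence $\phi \geq s \geq 5/4$ and we are done. If $s < 5/4$, I plan to combine the compact formula with the elementary identity $4uv = s^2 - (u-v)^2$ (which follows from $s = u+v$) to obtain
$$
16\,uv\,(\phi - 5/4) = s^2\bigl(16|c|^2 + 4s - 5\bigr) + (5 - 4s)(u - v)^2.
$$
Since $5 - 4s > 0$ in this case, the $(u-v)^2$ summand is nonnegative, so it suffices to prove $16|c|^2 + 4s - 5 \geq 0$.

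For this final algebraic step, substituting $s = 2 + 3c_1 - \sqrt{3}c_2$ and completing the square yields the sum-of-squares identity
$$
16|c|^2 + 4s - 5 = 16\bigl(c_1 + \tfrac{3}{8}\bigr)^2 + 16\bigl(c_2 - \tfrac{\sqrt{3}}{8}\bigr)^2 \;\geq\; 0,
$$
which completes the proof. As a by-product, equality holds exactly when $c_1 = -3/8$ and $c_2 = \sqrt{3}/8$ (i.e.\ at the point $b = 5/8 + i\sqrt{3}/8$, which lies in $\mathcal{D}$), so the bound is sharp.

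The main obstacle is finding the right change of variables that collapses $\phi$ into the form $s + s^2|c|^2/(uv)$; once that compact expression and the companion identity $4uv = s^2 - (u-v)^2$ are in place, the case split and the completion of the square are routine. I expect the only real temptation for a wrong turn to be trying to apply AM-GM directly to $\phi$, which merely yields $\phi \geq (|b|+|b-1|)^2$ and is too weak on $\mathcal{D}$; the argument above succeeds precisely because it exploits the specific algebraic structure of $u$ and $v$ rather than just their magnitudes.
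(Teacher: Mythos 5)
Your proof is correct, and it takes a genuinely different route from the paper.

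You collapse $\phi$ into the compact form
\[
\phi = s + \frac{s^2\,|c|^2}{uv},\qquad u := b_1 - \sqrt{3}\,b_2,\;\; v := 2b_1 - 1,\;\; s := u+v,\;\; c := b-1,
\]
and then reduce everything to the sum-of-squares identity $16|c|^2 + 4s - 5 = 16\bigl(c_1+\tfrac{3}{8}\bigr)^2 + 16\bigl(c_2-\tfrac{\sqrt{3}}{8}\bigr)^2$ together with $4uv = s^2 - (u-v)^2$. I checked all three algebraic identities (the compact form of $\phi$, the identity $16\,uv\,(\phi - 5/4) = s^2(16|c|^2+4s-5) + (5-4s)(u-v)^2$, and the completion of squares), and they are correct; the equality point $b = 5/8 + i\sqrt{3}/8$ you extract indeed lies in $\mathcal D$, and in fact it coincides with the minimizing point $\sigma = 1$ on the line $\mathcal L$ found in the paper's proof. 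The paper instead argues geometrically: it restricts $\phi$ to the line $\mathcal L\colon b_1 + \sqrt{3}b_2 = 1$ and to the pencil of lines orthogonal to $\mathcal L$, shows each orthogonal restriction is an even rational function of the transverse parameter $\tau$ with a minimum at $\tau = 0$ (which needs the sign analysis of the cubic $h(\sigma_0)$), and then observes that the restriction to $\mathcal L$ is a quadratic with minimum exactly $5/4$. Your argument avoids the line-pencil parameterization and the cubic sign check entirely, is more elementary, and has the bonus of exhibiting the unique minimizer and hence the sharpness of the constant $5/4$; the paper's argument, by contrast, more visibly displays the geometry of $\mathcal D$ and of the fibration by orthogonal lines.

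One small imprecision worth fixing: the paragraph preceding equation (3.4) in the paper only notes that the numerators $u$ and $v$ are \emph{nonzero} on $\mathcal D$, not that they are positive. Positivity does hold, but it requires the extra one-line observation that $\mathcal D$ is connected, $u$ and $v$ are continuous and nonvanishing there, and $u = v = 1 > 0$ at the common focus $b = 1 \in \mathcal D$. You should state this explicitly, since both of your cases rely on $u,v>0$ (Case 1 needs $uv>0$ to drop the second summand, and also implicitly $s>0$; Case 2 needs $uv>0$ to clear the denominator without flipping the inequality).
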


\begin{proof}
Let ${\mathcal L}$ be the line $b_1+\sqrt{3}b_2-1=0$, which we write in parametric form as
$$
\frac{-3+i\sqrt{3}}{8}\sigma+1,\quad\sigma\in\RR.
$$
The segment ${\mathcal I}:={\mathcal L}\cap{\mathcal D}$ is defined by the parameter range
\begin{equation}
-\frac{1}{\sqrt{15}+3}<\sigma<\frac{1}{\sqrt{15}-3}.\label{paramrange}
\end{equation}
It passes through the common focus of $\partial {\mathcal E}_1$ and $\partial {\mathcal E}_2$ at 1 (for $\sigma=0$) and its closure joins the two points of the intersection $\partial {\mathcal E}_1\cap\partial {\mathcal E}_2$. By restricting $\phi$ to ${\mathcal I}$, one obtains the quadratic function
$$
\hat\phi(\sigma):=\frac{1}{4}(3\sigma^2-6\sigma+8),\label{hatphi}
$$
which is easily seen to be greater than or equal to $5/4$ everywhere.

Next, we will restrict $\phi$ to line segments orthogonal to ${\mathcal L}$ and lying in ${\mathcal D}$. Fix $\sigma_0\in{\mathcal I}$ and consider the line ${\mathcal L}_{\sigma_0}$ given in parametric form as
$$
\frac{1+i\sqrt{3}}{2}\tau+\frac{-3+i\sqrt{3}}{8}\sigma_0+1,\quad\tau\in\RR.
$$  
Clearly, ${\mathcal L}_{\sigma_0}$ passes through the point
$$ 
{\mathbf b}_0:=\frac{-3+i\sqrt{3}}{8}\sigma_0+1\in{\mathcal I}
$$
and is orthogonal to ${\mathcal L}$. Restricting $\phi$ to the segment ${\mathcal L}_{\sigma_0}\cap{\mathcal D}$, one obtains the function
\begin{equation}
\hat\phi_{\sigma_0}(\tau):=\frac{4-3\sigma_0}{4(\left(4-3\sigma_0\right)^2-16\tau^2)}\left(\left(32-48\sigma_0\right)\tau^2-9\sigma_0^3+30\sigma_0^2-48\sigma_0+32\right)\label{restrorth}
\end{equation}
for some range of the parameter $\tau$. Notice that in (\ref{restrorth}) the denominator vanishes exactly at the two points where ${\mathcal L}_{\sigma_0}$ intersects the lines $2b_1-1=0$, $b_1-\sqrt{3}b_2=0$; these two points lie outside of ${\mathcal D}$. 

It is straightforward to see from (\ref{restrorth}) that $\hat\phi_{\sigma_0}$ takes its minimum at $\tau=0$, i.e., at the point ${\mathbf b}_0\in{\mathcal I}$. Indeed, one computes
$$
\hat\phi_{\sigma_0}'(\tau)=\frac{32(4-3\sigma_0)h(\sigma_0)}{(\left(4-3\sigma_0\right)^2-16\tau^2)^2}\tau,
$$
where
$$
h(\sigma_0):=-9\sigma_0^3+30\sigma_0^2-36\sigma_0+16.\label{derivrestrorth}
$$
Let us show that $h(\sigma_0)>0$. On $\bar{\mathcal I}$ one has
$$
h'(\sigma)=-27\sigma^2+60\sigma-36<0.
$$
Therefore, $h(\sigma_0)$ is bounded from below by the value of $h$ at the right endpoint of $\bar{\mathcal I}$, i.e., at $1/(\sqrt{15}-3)$ (see (\ref{paramrange})). One easily computes that $h(1)=918\sqrt{15}-3555>0$, thus $h(\sigma_0)>0$ as required. 

Since the function $\hat\phi$ is already known to be greater than or equal to $5/4$, it follows that $\hat\phi_{\sigma_0}$ is greater than or equal to $5/4$ everywhere. 

This completes the proof of the lemma. \end{proof}

As Lemma \ref{philarge} contradicts inequality (\ref{theinequality}), the theorem follows.\qed

\section{A few remarks}\label{sect4}
\setcounter{equation}{0}

We conclude the paper by making a number of observations and comments.

\begin{remark}\label{remarkinj}
First of all, it is easy to see that $\tilde F_n$ is not injective on $M_t^3$ for every $t\ge\sqrt{2}$ and every $n$. Indeed, fix $t\ge\sqrt{2}$, let $u\ne 0$ be a real number satisfying
$$
2u^2+\frac{1}{u^2}=2t, 
$$
(cf.~Lemma \ref{min}) and consider the following two distinct points in $Q^3$:
\begin{equation}
w:=\left(u,\frac{1}{u},u,0\right), w':=\left(u,0,u,\frac{1}{u}\right).\label{threepoints}
\end{equation}
Then $w, w'\in M_t^3$, and it follows from (\ref{formoffn}), (\ref{formofpn}) that $\tilde F_n(w)=\tilde F_n(w')=(u,u,0)$. 
\end{remark}

\begin{remark}\label{embofsn}
The polynomials $R_n$ constructed in the proof of Theorem \ref{main1} do not vanish on the sphere $S^3\subset Q^3$ (see (\ref{sphereq})). Therefore, by (\ref{formoffn}), (\ref{equationforp}) and Lemma \ref{nondegenerate}, all the maps $\tilde F_n$ are nondegenerate at every point of $S^3$. It is also clear that all $\tilde F_n$ are injective on $S^3$. Hence, each $\tilde F_n$ yields an explicit real-analytic totally real embedding of $S^3$ to $\CC^3$. It then follows that each $\tilde F_n$ is biholomorphic is a neighborhood of $S^3$ in $Q^3$ and thus defines a CR-embedding of $M_t^3$ in $\CC^3$ if $1<t<\tau_n$ for some $\tau_n$. We did not attempt to determine or estimate $\tau_n$ for $n>1$ as the calculations involved appear to be quite hard. Recall that, by Remark \ref{remarkinj}, the map $\tilde F_n$ fails to be injective on $M_t^3$ for all $t\ge\sqrt{2}$, so we have $\tau_n\le\sqrt{2}$.       
\end{remark}

\begin{remark}\label{othermaps}
Article \cite{AR} yields a class of maps of the form (\ref{formF}), with the restriction of $f$ to $S^3$ being a harmonic polynomial given by
$$
{\mathcal P}=\left(\bar w_1\frac{\partial}{\partial w_3}-\bar w_3\frac{\partial}{\partial w_1}\right)\left(\sum_{j=1}^m\frac{{\mathcal Q}_j}{p_j(q_j+1)}\right),
$$
where ${\mathcal Q}_j$ is a homogeneous harmonic complex-valued polynomial in $w_1$, $\bar w_1$, $w_3$, $\bar w_3$ of total degree $p_j\ge 1$ in $w_1,w_3$ and total degree $q_j$ in $\bar w_1$, $\bar w_3$, such that the sum ${\mathcal Q}:={\mathcal Q}_1+\dots+{\mathcal Q}_m$ does not vanish on $S^3$. Every map of this kind defines a totally real embedding of $S^3$ to $\CC^3$ and therefore a CR-embedding of $M_t^3$ for $t$ sufficiently close to 1. One can homogenize ${\mathcal P}$ by multiplying its lower-degree homogeneous components by suitable powers of the polynomial $|w_1|^2+|w_3|^2$, which is equal to 1 on $S^3$. The resulting polynomial $\hat{\mathcal P}(w_1,\bar w_1,w_3,\bar w_3)$ may no longer be harmonic; however, the map
$$
(w_1,w_3)\mapsto (w_1,w_3,\hat{\mathcal P}(w_1,\bar w_1,w_3,\bar w_3))
$$
still defines the same totally real embedding of $S^3$ to $\CC^3$ and its extension to $\CC^4$
\begin{equation}
(w_1,w_2,w_3,w_4)\mapsto (w_1,w_3, \hat{\mathcal P}(w_1,w_2,w_3,w_4))\label{armaps}
\end{equation}
the same CR-embedding of $M_t^3$ for $t$ sufficiently close to 1.

As one example, in \cite{AR} the authors set
$$
\begin{array}{l}
\displaystyle{\mathcal P}=w_3\bar w_1\bar w_3^2-w_1\bar w_1^2\bar w_3+i\bar w_1\bar w_3=\\
\vspace{-0.3cm}\\
\displaystyle\hspace{1.8cm}\left(\bar w_1\frac{\partial}{\partial w_3}-\bar w_3\frac{\partial}{\partial w_1}\right)\left(\frac{|w_1|^4-4|w_1|^2|w_3|^2+|w_3|^4}{6}+\frac{i(|w_3|^2-|w_1|^2)}{2}\right).
\end{array}
$$
Here
\begin{equation}
{\mathcal Q}=|w_1|^4-4|w_1|^2|w_3|^2+|w_3|^4+i(|w_3|^2-|w_1|^2),\label{examq}
\end{equation}
and it is not hard to see that ${\mathcal Q}$ indeed does not vanish on $S^3$. To homogenize ${\mathcal P}$, one multiplies its lowest-degree homogeneous component $i\bar w_1\bar w_3$ by $|w_1|^2+|w_3|^2$, which yields the polynomial
$$
\hat{\mathcal P}(w_1,\bar w_1,w_3,\bar w_3)=(1+i)(w_3\bar w_1\bar w_3^2+iw_1\bar w_1^2\bar w_3).
$$
This is the polynomial (up to the factor $1+i$) that appears in the main theorem of \cite{AR}. Its natural extension to $\CC^4$ is
\begin{equation}
\hat{\mathcal P}(w_1,w_2,w_3,w_4)=(1+i)(w_2w_3w_4^2+iw_1 w_2^2 w_4).\label{polyphatexam}
\end{equation}
In \cite{I1}, \cite{I2} we investigated the corresponding map (\ref{armaps}) for nondegeneracy and injectivity and eventually proved in \cite[Theorem 1.1]{I2} that this map yields a CR-embedding of $M_t^3$ to $\CC^3$ for all $1<t<\sqrt{(2+\sqrt{2})/3}$. Most of our effort went into establishing injectivity for $t$ in this range.

The polynomials $P_n$ that we utilized in the proof of Theorem \ref{main1} in Section \ref{sect2} (see formula (\ref{formofpn})) are homogeneous by construction, and, except in the case $n=1$, we do not know whether they arise from suitable harmonic polynomials by the homogenization procedure described above. For $n=1$ calculations are easy, and $P_1$ is readily seen to come from the inhomogeneous harmonic polynomial
$$
-\frac{1}{6}w_3\bar w_1\bar w_3^2+\frac{1}{6}w_1\bar w_1^2\bar w_3-\frac{i}{2\sqrt{3}}\bar w_1\bar w_3.
$$

Note that for the polynomial $\hat{\mathcal P}$ from (\ref{polyphatexam}), the expression 
$$
w_3\frac{\partial \hat{\mathcal P}}{\partial w_2}-w_1\frac{\partial \hat{\mathcal P}}{\partial w_4},
$$
when restricted to $Q^3$, has two distinct roots if regarded as a function of the product $w_3w_4$ (see \cite[formulas (2.7), (2.8)]{I2}). For comparison, from (\ref{equationforp}) we see that the analogous expression for $P_n$ in place of $\hat{\mathcal P}$ is equal to $R_n$ whose restriction to $Q^3$ has only one (multiple) root (see (\ref{restrrn})). This makes our polynomials $P_n$ easier to deal with in computations. One illustration of this is the proof of Theorem \ref{main2}, where we used $F_1$ instead of map (\ref{armaps}) with $\hat{\mathcal P}$ given by (\ref{polyphatexam}), on which the proof of \cite[Teorem 1.1]{I2} was based. In particular, formulas (\ref{solquadr}) are less complicated than the corresponding formulas in \cite{I2}. Overall, the proof of Theorem \ref{main2} is computationally much more transparent than that of \cite[Theorem 1.1]{I2}.

It is possible that one can investigate the CR-embeddability of $M_t^3$ in $\CC^3$ for all $t$ by using other maps of the form (\ref{armaps}). Note, however, that while it is tempting to take ${\mathcal Q}$ to be a polynomial in $|w_1|^2$, $|w_3|^2$ (as was done in (\ref{examq})), one should avoid doing so as otherwise map (\ref{armaps}) will not be injective on $M_t^3$ with $t\ge\sqrt{2}$. This follows exactly as in Remark \ref{remarkinj}; namely (\ref{armaps}) takes equal values at the two points defined in (\ref{threepoints}).
\end{remark}


\begin{thebibliography}{ABC}

\bibitem[AR]{AR} Ahern, P. and Rudin, W., Totally real embeddings of $S^3$ in $\CC^3$, {\it Proc. Amer. Math. Soc.} {\bf 94} (1985), 460--462.

\bibitem[BN]{BN}  Bell, S. and Narasimhan, R., Proper holomorphic mappings of complex spaces, in {\it Several complex variables, VI}, Encyclopaedia Math. Sci., 69, Springer, Berlin, 1990, pp. 1--38.

\bibitem[BH]{BH} Burns, D. and Hind, R., Symplectic geometry and the uniqueness of Grauert tubes, {\it Geom. Funct. Anal.} {\bf 11} (2001), 1--10.

\bibitem[F1]{F1} Forstneri\v c, F., Some totally real embeddings of three-manifolds, {\it Manuscripta Math.} {\bf 55} (1986), 1--7. 

\bibitem[F2]{F2} Forstneri\v c, F., On totally real embeddings into $\CC^n$, {\it Exposition. Math.} {\bf 4} (1986), 243--255. 

\bibitem[F3]{F3} Forstneri\v c, F., A totally real three-sphere in $\CC^3$ bounding a family of analytic disks, {\it Proc. Amer. Math. Soc.} {\bf 108} (1990), 887--892.

\bibitem[G1]{G1} Gromov, M., Convex integration of differential relations, {\it Math. USSR Izv.} {\bf 7} (1973), 329--343.

\bibitem[G2]{G2} Gromov, M., {\it Partial Differential Relations}, Ergebnisse der Mathematik und ihrer Grenzgebiete (3), vol. 9, Springer-Verlag, Berlin, 1986.

\bibitem[I1]{I1} Isaev, A. V., On the classification of homogeneous hypersurfaces in complex space, {\it Internat. J. Math.} {\bf 24} (2013), no. 8, 1350064 (11 pages, electronic), DOI: 10.1142/S0129167X1350064X.

\bibitem[I2]{I2} Isaev, A. V., On homogeneous hypersurfaces in $\CC^3$, {\it J. Geom. Anal.} {\bf 27} (2017), 2044--2054.

\bibitem[KZ]{KZ} Kaup, W. and  Zaitsev, D., On the CR-structure of compact group orbits associated with bounded symmetric domains, {\it Invent. Math.} {\bf 53} (2003), 45--104.

\bibitem[MN]{MN} Morimoto, A. and Nagano, T., On pseudo-conformal transformations of hypersurfaces, {\it J. Math. Soc. Japan} {\bf 15} (1963), 289--300.

\bibitem[SZ]{SZ} Stout, E. L. and Zame, W. R., A Stein manifold topologically but not holomorphically equivalent to a domain in $\CC^n$, 
{\it Adv. Math.}  {\bf 60} (1986), 154--160.

\end{thebibliography}
\end{document}